\newcommand{\R}{\mathbbm{R}}    
\providecommand{\ab}[1]{\vert #1\vert}		
\newcommand{\N}{\mathbbm{N}}    
\renewcommand{\leq}{\leqslant}
\renewcommand{\geq}{\geqslant}
\theoremstyle{plain}
\newtheorem{teo}{Theorem}
\newtheorem{lemma}{Lemma}
\newtheorem{prop}{Proposition}
\begin{document}
\title{The joints problem in $\R^n$}
\author{Ren\'e Quilodr\'an }\thanks{Research supported by NSF grant DMS-0401260}
\address{Department of Mathematics, University of California, Berkeley, CA 94720-3840, US}
\email{rquilodr@math.berkeley.edu}
\begin{abstract}
We show that given a collection of $A$ lines in $\R^n$, $n\geq 2$, the maximum number of their joints (points incident to at least $n$ lines whose directions form a linearly independent set) is $O(A^{n/(n-1)})$. An analogous result for smooth algebraic curves is also proven.
\end{abstract}
\maketitle
\section{Introduction}
In a recent paper, Katz and Guth \cite{KG} proved that the number of joints determined by a given collection of $A$ lines in $\R^3$ is $O(A^{3/2})$, where a joint (in $\R^3$) is a point which is incident to at least three non-coplanar lines of the given collection. Lately, Elekes, Kaplan and Sharir \cite{EKS} extended the results in \cite{KG} to obtain a bound on the number of incidences between a collection of lines and a given subset of their joints, in $\R^3$, which implies the result on the number of joints (they also consider a more general situation where joints are replaced by an arbitrary set of points satisfying that no plane contains more that $O(A)$ points and each point is incident to at least three lines). Both results make use of algebraic geometric properties of polynomials in three variables, which  bound the number of critical lines (lines where the polynomial and its gradient both vanish) a polynomial can have in terms of its degree. For more references in this problem consult \cite{KG} and \cite{EKS}.

Our proof does not require the algebraic geometric considerations in \cite{KG} and \cite{EKS} about polynomials in $n$ variables but just the fact that given $m$ points in $\R^n$, there exists a nonzero polynomial $Q\in\R[x_1,\dots,x_n]$ such that $Q$ vanishes on all the given $m$ points and whose degree is bounded by $d\lesssim m^{1/n}$. The method can be seen as, and was largely inspired by, an adaptation of the methods in \cite{Guth} to the discrete case, more precisely the result in the section ``warmup to multilinear Kakeya'' of \cite{Guth}, together with the application of the polynomial method as in \cite{Dvir}.

We point out that an independent proof of the bound on the number of joints, due to Kaplan, Sharir and Shustin \cite{KSS}, appeared at the same time as the one presented in the first version of this work. Our proof has some similarities with the proof in \cite{KSS} (for example, compare Lemma \ref{line-lemma} below and the ``Differentiating'' step in the proof of Theorem 1 in \cite{KSS}).

\section{The main result}

For a given collection of lines $L$ in $\R^n$ consider the set $J$ of points of the form $\cap_{i=1}^n \ell_i$, where $\ell_i\in L$ for all $1\leq i\leq n$ and the directions of the lines $\ell_1,\dots,\ell_n$, are linearly independent. We will refer to $J$ as the set of transverse intersections, or joints, of $L$.

{\bf Notation.} In this section the letters $L$ and $J$ will always be use with the same meaning, a set of lines in $\R^n$ and the set of joints determined by the set of lines, respectively. We will denote by $\ab{S}$ the cardinality of the set $S$. We also use the notation $X\lesssim Y$, $Y\gtrsim X$, $Y=\Omega(X)$ or $X=O(Y)$ to denote any estimate of the form $X\leq CY$ where $C$ is a constant that depends only on the dimension $n$. We use $X=\Theta(Z)$ to denote $X=O(Z)$ and $Z=O(X)$.

Our main Theorem is the following.

\begin{teo}
\label{main}
Let $L$ be a collection of lines in $\R^n$, then the cardinality of the set of joints of $L$, $J$, satisfies $\ab{J}\lesssim \ab{L}^{n/(n-1)}$.
\end{teo}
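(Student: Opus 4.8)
The plan is to combine a greedy pruning step with the polynomial method of \cite{Dvir}, very much in the spirit of the ``warmup to multilinear Kakeya'' of \cite{Guth}. Write $A=\ab{L}$ and $N=\ab{J}$. I may assume $N\geq 4A$, since otherwise $N<4A\leq 4A^{n/(n-1)}$ and there is nothing to prove. The mechanism I want to exploit is this: if a nonzero polynomial $Q$ vanishes on sufficiently many joints on each of the lines meeting at a joint $p$, then it vanishes identically on all those lines, and differentiating along the $n$ linearly independent directions at $p$ forces $\nabla Q(p)=0$ (the same elementary computation underlies Lemma~\ref{line-lemma} below).

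First I would prune. Fix the threshold $t=\lfloor N/(2A)\rfloor\geq 1$, and repeatedly discard any line incident to at most $t$ joints of the \emph{current} collection of lines, recomputing the joint set after each removal, until every surviving line meets more than $t$ current joints. Removing one line destroys at most $t$ joints, because a point can cease to be a joint only if one of its incident lines is deleted, and the deleted line carried at most $t$ joints; since at most $A$ lines are ever deleted, at most $At\leq N/2$ joints are lost. This produces a nonempty subcollection $L'\subseteq L$ whose set of joints $J'$ satisfies $\ab{J'}\geq N/2>0$ and such that every line of $L'$ passes through more than $t$ points of $J'$.

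Next I would bring in the polynomial. By the counting fact quoted in the introduction there is a nonzero $Q\in\R[x_1,\dots,x_n]$ vanishing on $J'$ with $\deg Q\lesssim\ab{J'}^{1/n}\leq N^{1/n}$; choose $Q$ of minimal degree $d$ among all nonzero polynomials vanishing on $J'$, so still $d\lesssim N^{1/n}$, and $d\geq 1$ because $J'\neq\emptyset$ (a nonzero constant vanishes nowhere). I claim one cannot have $t\geq d$. Indeed, if $t\geq d$ then for each line $\ell\in L'$ the restriction $Q|_\ell$ is a one-variable polynomial of degree at most $d$ with more than $t\geq d$ zeros, so $Q$ vanishes identically on $\ell$; then at each $p\in J'$, choosing $n$ lines of $L'$ through $p$ with linearly independent directions $v_1,\dots,v_n$, the identity $Q\equiv 0$ on each line gives $\nabla Q(p)\cdot v_i=0$ for all $i$, hence $\nabla Q(p)=0$. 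Thus every $\partial_i Q$ vanishes on $J'$; since $d\geq 1$, at least one $\partial_i Q$ is a nonzero polynomial (apply Euler's identity to the top-degree part of $Q$), of degree at most $d-1$, contradicting the minimality of $d$. Therefore $t<d\lesssim N^{1/n}$, while $t=\lfloor N/(2A)\rfloor\geq N/(4A)$ since $N\geq 4A$; combining, $N/(4A)\lesssim N^{1/n}$, i.e. $N^{(n-1)/n}\lesssim A$, which is exactly $\ab{J}\lesssim\ab{L}^{n/(n-1)}$.

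I expect the main difficulty to be bookkeeping with the two parameters rather than any single hard estimate. The pruning must be carried out against the joints of the \emph{shrinking} collection, so that a discarded line really only costs $t$ joints; and one needs every surviving line to carry strictly more joints than the degree of the minimal vanishing polynomial of $J'$. It is precisely the tension between wanting $t$ large enough to dominate $d$ and the a priori bound $d\lesssim\ab{J'}^{1/n}$ that drives the argument: when $t\geq d$ we reach a contradiction, and when $t<d$ the inequality hands back the theorem. A secondary point to verify carefully is the elementary claim that a nonzero polynomial of degree $d\geq 1$ has some nonvanishing first partial derivative, so that the minimality of $d$ is genuinely violated.
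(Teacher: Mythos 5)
Your argument is correct and is essentially the paper's proof: greedy pruning of lines that are poor in joints of the \emph{current} configuration, combined with the vanishing-polynomial lemma, only with the threshold parameterized as $t\sim \ab{J}/\ab{L}$ rather than the paper's $m\sim \ab{J}^{1/n}$ (the two choices are dual and yield the same inequality $\ab{J}/\ab{L}\lesssim \ab{J}^{1/n}$), and with the minimal-degree trick replacing the paper's iterated differentiation of all partials inside Lemma~\ref{line-lemma}. All the delicate points check out, in particular that each deleted line destroys at most $t$ joints of the shrinking configuration and that every point of $J'$ still carries $n$ transversal lines of the pruned family $L'$.
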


We start by proving the following Lemma.
\begin{lemma}
\label{line-lemma}
Let $J'$ be a subset of $J$ with the property that every line $\ell\in L$ with $\ell\cap J'\neq \emptyset$ contains at least $m$ points of $J'$, that is $\ab{\ell\cap J'}\geq m$, for some given constant $m$. Then $\ab{J'}\geq C_nm^n$, where $C_n$ is a constant depending on $n$ only.
\end{lemma}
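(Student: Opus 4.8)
The plan is to argue by contradiction via the polynomial method, using the interpolation fact quoted in the introduction. Suppose $\ab{J'}<C_nm^n$, where $C_n$ is a small dimensional constant to be fixed at the end; we may also assume $m$ is a positive integer (if $m\leq 0$ there is nothing to prove, and $\ab{\ell\cap J'}\geq m$ is equivalent to $\ab{\ell\cap J'}\geq\lceil m\rceil$) and that $J'\neq\emptyset$. By the interpolation fact there is a nonzero $Q\in\R[x_1,\dots,x_n]$ vanishing on all of $J'$ with $\deg Q=d\lesssim\ab{J'}^{1/n}$, and choosing $C_n$ small enough we may ensure $d<m$, i.e. $d\leq m-1$.

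The key step is a local-to-global observation for lines. If $\ell\in L$ meets $J'$, then by hypothesis $\ab{\ell\cap J'}\geq m$, so the restriction of $Q$ to $\ell$ is a univariate polynomial of degree at most $d<m$ vanishing at $\geq m$ distinct points; hence it is identically zero and $Q$ vanishes on all of $\ell$. Now fix $p\in J'$. Being a joint, $p$ lies on lines $\ell_1,\dots,\ell_n\in L$ with linearly independent directions $v_1,\dots,v_n$; since $Q$ vanishes identically on each $\ell_i$, the directional derivative $v_i\cdot\nabla Q(p)$ vanishes for every $i$, and linear independence of the $v_i$ forces $\nabla Q(p)=0$. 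Thus every first-order partial derivative of $Q$ vanishes on all of $J'$. This is the base case $k=0$ of an induction: the same two moves apply verbatim to any $\partial^\alpha Q$.

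Precisely, I would show by induction on $\ab{\alpha}$ that $\partial^\alpha Q$ vanishes on $J'$ for every multi-index $\alpha$. If this holds for all $\alpha$ with $\ab{\alpha}=k$, then each such $\partial^\alpha Q$ has degree at most $d<m$ and vanishes at the $\geq m$ points of $\ell\cap J'$ for every line $\ell\in L$ meeting $J'$, hence vanishes identically on each such $\ell$; the joint argument at each $p\in J'$ then gives $\nabla(\partial^\alpha Q)(p)=0$, so all partials of order $k+1$ vanish on $J'$. Finally, pick $\alpha$ with $\ab{\alpha}=d$ such that the monomial $x^\alpha$ occurs in $Q$ with nonzero coefficient; then $\partial^\alpha Q$ is a nonzero constant which, by the above, must vanish on the nonempty set $J'$ — a contradiction. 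Therefore $\ab{J'}\geq C_nm^n$.

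The main obstacle, such as it is, is bookkeeping rather than geometry: one must arrange the \emph{strict} inequality $\deg Q<m$ out of the interpolation bound $\deg Q\lesssim\ab{J'}^{1/n}$ (this is exactly what fixes $C_n$), and then check that differentiating never spoils it — which is immediate since $\deg\partial^\alpha Q\leq\deg Q$. The only genuinely geometric ingredient is that the $n$ transverse directions at a joint span $\R^n$, which is what upgrades ``all directional derivatives along the lines through $p$ vanish'' to ``$\nabla(\partial^\alpha Q)(p)=0$''; everything else is the degree accounting for iterated partial derivatives.
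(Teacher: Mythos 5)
Your proposal is correct and follows essentially the same argument as the paper: interpolate a nonzero polynomial of degree less than $m$ vanishing on $J'$, show it vanishes identically on every line of $L$ meeting $J'$, use the $n$ transverse directions at each joint to kill the gradient, and iterate over all partial derivatives to force $Q\equiv 0$. Your closing step (extracting a top-degree monomial so that $\partial^\alpha Q$ is a nonzero constant) just makes explicit the final contradiction that the paper states more briefly.
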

\begin{proof}
By contradiction, assume there exists an arrangement of lines $L$ and points $J'$ as in the statement of the Theorem, where $\ab{J'}\leq \frac{m^n}{K}$, where $K$ is a big constant depending on $n$ only that we will choose later. Let $Q\in\R[x_1,\dots,x_n]$ be a nonzero polynomial that vanishes on every point of $J'$. We can choose $Q$ of degree $\text{deg}(Q)\leq c(n)\ab{J'}^{1/n}\leq \frac{c(n)}{K^{1/n}}m$ (because the space of polynomials of degree $\leq d$ has dimension $\binom{d+n}{d}=\Theta(d^n)$).
Choosing $K$ sufficiently big depending on $n$ only we can ensure that $\text{deg}(Q)<m$. The restriction of $Q$ to any line of $L$ which intersects $J'$ is a polynomial in one variable of degree $<m$ that vanishes on at least $m$ points, hence it vanishes identically. From $Q|_\ell=0$ we obtain $\nabla Q\cdot v|_\ell=0$, where $v$ is the direction of $\ell$. Therefore at each point of $J'$, $\nabla Q$ is orthogonal to a linearly independent set of $n$ vectors, so it is zero. Now every component of $\nabla Q$ vanishes on $J'$ and has degree $\text{deg}(\nabla Q)< \text{deg}(Q)<m$. We can apply the same argument to every component of $\nabla Q$, so inductively we obtain $\frac{\partial^{\alpha}Q}{\partial x^{\alpha}}=0$ on $J'$, for every multi-index $\alpha\in \N^n$. From here it follows that $Q$ is identically zero, which is a contradiction.

\end{proof}

Following the initial publication of this work, Fedor Nazarov observed that the proof of Theorem \ref{main} follows immediately from Lemma \ref{line-lemma}. We have left the original proof in the last section.

\begin{proof}[Proof of Theorem \ref{main}]
Let $m=K\ab{J}^{1/n}$, where $K$ satisfies $K^nC_n>1$ and $C_n$ is the constant in the conclusion of Lemma \ref{line-lemma} (hence $K$ depends on $n$ only). We start an iterative process to remove lines from $L$ having a control in the number of joints removed at each step. Let $L^{(0)}=L$ and $J^{(0)}=J$. Suppose that $L^{(i)}\subseteq L$, $L^{(i)}\neq \emptyset$ has been defined, and let $J^{(i)}\subseteq J$ denote the set of joints determined by $L^{(i)}$. With the choice of $m$, there must be a line $\ell_i\in L^{(i)}$ that contains no more than $m$ joints of $J^{(i)}$, otherwise, by Lemma \ref{line-lemma}, we would have $\ab{J}\geq\ab{J^{(i)}}\geq C_n m^n=K^nC_n\ab{J}>\ab{J}$ which is a contradiction. 

Define $L^{(i+1)}=L^{(i)}\backslash\{\ell_i\}$ and let $J^{(i+1)}$ be the set, possibly empty, of joints of $L^{(i+1)}$, which are necessarily contained in $J$. In this way we have $\ab{J^{(i)}}\leq \ab{J^{(i+1)}}+m$.

Since for $i\geq \ab{L}-(n-1)$ we have $J^{(i)}=\emptyset$, we conclude that $\ab{J}=\ab{J^{(0)}}\leq m\ab{L}=O(\ab{J}^{1/n}\ab{L})$, from where we obtain $\ab{J}\lesssim \ab{L}^{n/(n-1)}$.

\end{proof}

\section{The case of algebraic curves}
A similar bound as the one in Theorems \ref{main} can be proven if we replace lines by algebraic curves. By a smooth curve $\gamma$ we mean a curve such that its tangent vector $\dot\gamma$ exists at every point of $\gamma$ and is nonzero. Given a collection $\mathcal C$ of smooth curves we define the set of joints, $J$, determined by $\mathcal C$ as the set of incidences of at least $n$ curves in $\mathcal C$ such that the tangent vectors of the curves at the intersection are linearly independent. 

We start by considering a special case of algebraic curves. Let $\mathcal C$ be a set of smooth curves, each parametrized by polynomials, that is, if $\gamma\in \mathcal C$ we can parametrize it as $\gamma(t)=(P_1(t),\dots,P_n(t))$ where each $P_i$ is a polynomial in one variable of degree at most $d$, for a given constant $d$. We let $J$ denote the set of joints determined by $\mathcal C$.

A minor modification of Lemma \ref{line-lemma} gives the following.

\begin{lemma}
\label{line-lemma2}
Let $\mathcal C$ and $J$ be as in the previous paragraph, and let $J'$ be a subset of $J$ with the property that $\ab{\gamma\cap J'}\geq m$ for every curve $\gamma\in \mathcal C$ with $\gamma\cap J'\neq \emptyset$, for some given constant $m$. Then $\ab{J}=\Omega(m^n/d^n)$.
\end{lemma}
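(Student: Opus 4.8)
The plan is to reproduce the proof of Lemma~\ref{line-lemma} almost verbatim, the single new point being to track how the degree of a polynomial behaves when it is composed with a polynomial parametrization rather than with a linear one. If $\gamma(t)=(P_1(t),\dots,P_n(t))$ with each $\deg P_i\leq d$, then for any $Q\in\R[x_1,\dots,x_n]$ the univariate polynomial $t\mapsto Q(\gamma(t))$ has degree at most $d\,\deg Q$ (a monomial of $Q$ of degree $k$ pulls back to a polynomial in $t$ of degree $\leq kd$). This extra factor $d$ is precisely what turns the bound $\Omega(m^n)$ of Lemma~\ref{line-lemma} into $\Omega(m^n/d^n)$.

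Concretely, I would argue by contradiction, assuming there is an arrangement $\mathcal C,J'$ as in the statement with $\ab{J'}\leq \frac{m^n}{Kd^n}$ for a constant $K=K(n)$ to be chosen. Pick a nonzero $Q$ vanishing on $J'$ with $\deg Q\leq c(n)\ab{J'}^{1/n}$ (possible since polynomials of degree $\leq D$ span a space of dimension $\Theta(D^n)$); then $\deg Q\leq \frac{c(n)}{K^{1/n}}\cdot\frac{m}{d}$, and taking $K$ large depending on $n$ only we get $d\,\deg Q<m$. For each $\gamma\in\mathcal C$ meeting $J'$, the polynomial $t\mapsto Q(\gamma(t))$ has degree $\leq d\,\deg Q<m$ and vanishes at the $\geq m$ distinct parameter values that produce the points of $\gamma\cap J'$ (distinct points are attained at distinct parameters), hence vanishes identically. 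Differentiating, $\nabla Q(\gamma(t))\cdot\dot\gamma(t)=0$ for all $t$, so at each $p\in J'$ the gradient $\nabla Q(p)$ is orthogonal to the $n$ linearly independent tangent vectors of the curves through $p$, whence $\nabla Q(p)=0$. Each component of $\nabla Q$ now vanishes on $J'$ and has degree $<\deg Q$, so $d$ times its degree is still $<m$ and the argument applies again; inductively $\partial^\alpha Q\equiv 0$ on $J'$ for every multi-index $\alpha$, forcing $Q\equiv 0$, a contradiction. Hence $\ab{J}\geq\ab{J'}>\frac{m^n}{Kd^n}=\Omega(m^n/d^n)$.

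I do not anticipate any real obstacle: the only role of the smoothness hypothesis is to make the tangent directions $\dot\gamma$ at the joints well defined and nonzero so that the linear-independence condition there is meaningful, and the rest is a line-by-line transcription of Lemma~\ref{line-lemma} with $m$ replaced by $m/d$. The one thing worth stating carefully is the degree bound $\deg(Q\circ\gamma)\leq d\,\deg Q$ and the resulting constraint $\ab{J'}\lesssim (m/d)^n$; everything downstream of that is identical to the linear case.
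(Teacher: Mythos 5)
Your proof is correct and is exactly the argument the paper intends: the paper gives no separate proof of Lemma~\ref{line-lemma2}, simply remarking that it "follows as in the case of lines," and the one genuinely new ingredient — that $\deg(Q\circ\gamma)\leq d\,\deg Q$ forces the threshold $\ab{J'}\lesssim (m/d)^n$ — is precisely what you isolate and verify.
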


The conclusion follows as in the case of lines, and the bound on the number of joints is $\ab{J} \leq C
_n\ab{\mathcal C}^{n/(n-1)}d^{n/(n-1)}$, where $C_n$ is a constant depending on $n$ only.

More generally, if we consider an irreducible, smooth algebraic curve $\gamma$ of degree $d$ and if $Q\in \R[x_1,\dots,x_n]$ has degree $<m/d$ and its zero locus intersects $\gamma$ on at least $m$ different points, then the curve is contained in the zero set of $Q$, that is $Q|_\gamma\equiv0$, by an application of Bezout's Theorem (see for example Chapter 1 in \cite{Har} or Chapter 3 in \cite{Shaf}). Hence the same conclusion as in Lemma \ref{line-lemma2} holds if we let $\mathcal C$ consist of irreducible, smooth algebraic curves of degree at most $d$. Therefore we have the following Theorem.
\begin{teo}
Let $\mathcal C$ be a collection of irreducible, smooth algebraic curves of degree at most $d$ in $\R^n$. Let $J$ denote the set of joints determined by $\mathcal C$. Then the cardinality of $J$ satisfies $\ab{J}\leq C_n\ab{\mathcal C }^{n/(n-1)}d^{n/(n-1)}$, for some constant $C_n$ depending on $n$ only.
\end{teo}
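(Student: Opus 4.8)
The plan is to repeat the two–step structure of the line case. First I would establish the curve analogue of Lemma~\ref{line-lemma} (this is Lemma~\ref{line-lemma2} together with the Bezout strengthening indicated in the paragraph preceding the statement), and then feed it into the greedy curve–removal argument used to prove Theorem~\ref{main}.

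\emph{A counting lemma for curves.} I would show: if $J'\subseteq J$ has the property that $\ab{\gamma\cap J'}\geq m$ for every $\gamma\in\mathcal C$ meeting $J'$, then $\ab{J'}=\Omega(m^n/d^n)$. Arguing by contradiction as in Lemma~\ref{line-lemma}, assume $\ab{J'}\leq m^n/(Kd^n)$ for a large constant $K=K(n)$ to be fixed, and choose a nonzero $Q\in\R[x_1,\dots,x_n]$ vanishing on $J'$ with $\deg Q\leq c(n)\ab{J'}^{1/n}\leq (c(n)/K^{1/n})(m/d)$; taking $K$ large enough depending on $n$ gives $\deg Q<m/d$. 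Fix any $\gamma\in\mathcal C$ with $\gamma\cap J'\neq\emptyset$. Since $\gamma$ is irreducible of degree $\leq d$ and $\{Q=0\}$ is a hypersurface of degree $\deg Q$, either $\gamma\subseteq\{Q=0\}$ or, by Bezout's theorem, $\ab{\gamma\cap J'}\leq\ab{\gamma\cap\{Q=0\}}\leq d\,\deg Q<m$, which contradicts the hypothesis; hence $Q|_\gamma\equiv0$ for every such $\gamma$, and in particular $\nabla Q$ is orthogonal to the tangent line of $\gamma$ along $\gamma$. Now at each point $p\in J'$ there are $n$ curves of $\mathcal C$ through $p$ with linearly independent tangents; each of them meets $J'$ (at $p$), so $Q$ vanishes identically on it, and therefore $\nabla Q(p)$ is orthogonal to $n$ linearly independent vectors, i.e. $\nabla Q(p)=0$. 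Thus every component $\partial_iQ$ vanishes on $J'$ and has degree $<\deg Q<m/d$, so the same Bezout argument applies to it; iterating over multi-indices, $\partial^\alpha Q$ vanishes on $J'$ for every $\alpha$, which (choosing $\alpha$ with $\partial^\alpha Q$ a nonzero constant and using $J'\neq\emptyset$) forces $Q\equiv0$, a contradiction.

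\emph{Greedy removal.} Let $C_n$ be the constant furnished by the previous step and set $m=K\,d\,\ab{J}^{1/n}$ with $K$ chosen so that $C_nK^n>1$. Following the proof of Theorem~\ref{main} verbatim, put $\mathcal C^{(0)}=\mathcal C$ and let $J^{(i)}$ be the set of joints of $\mathcal C^{(i)}$. As long as $J^{(i)}\neq\emptyset$, the counting lemma guarantees some $\gamma_i\in\mathcal C^{(i)}$ with $\ab{\gamma_i\cap J^{(i)}}\leq m$, for otherwise $\ab{J}\geq\ab{J^{(i)}}\geq C_n m^n/d^n=C_nK^n\ab{J}>\ab{J}$. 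Deleting $\gamma_i$ removes at most $m$ joints, since a point of $J^{(i)}$ can cease to be a joint of $\mathcal C^{(i+1)}$ only if $\gamma_i$ passes through it, so $\ab{J^{(i)}}\leq\ab{J^{(i+1)}}+m$. After fewer than $\ab{\mathcal C}$ steps fewer than $n$ curves remain and no joints are left, whence $\ab{J}\leq m\ab{\mathcal C}=K\,d\,\ab{J}^{1/n}\ab{\mathcal C}$, and rearranging gives $\ab{J}\leq C_n\ab{\mathcal C}^{n/(n-1)}d^{n/(n-1)}$.

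\emph{Main obstacle.} Apart from Step~1 this is a transcription of the line case, so the crux is the Bezout input: an irreducible algebraic curve of degree $\leq d$ meeting a hypersurface of degree $e$ in more than $de$ points must be contained in it. Irreducibility is indispensable here, since for a reducible curve all the intersection points could lie on a single component not contained in the hypersurface; and this is also the place where one must be careful to pass to the complexification of $\gamma$, as Bezout's theorem is naturally a statement in complex projective space, noting that real intersection points are in particular complex ones, so the count is unaffected. Finally, as in Lemma~\ref{line-lemma}, smoothness of the curves enters only to provide a nonzero tangent vector at each joint, so that the gradient–orthogonality step yields $n$ genuinely independent linear conditions on $\nabla Q$.
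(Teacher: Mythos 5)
Your proposal is correct and follows essentially the same route as the paper: the curve analogue of Lemma~\ref{line-lemma} proved by the polynomial method with Bezout's theorem supplying the vanishing of $Q$ on each curve (exactly the observation the paper makes before the theorem), followed by the same greedy curve-removal iteration as in Theorem~\ref{main}. You have merely written out in full the details the paper compresses into ``the conclusion follows as in the case of lines,'' and your remarks on irreducibility and complexification are apt.
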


\section{The original proof of Theorem \ref{main}}

We include here the original proof we had of Theorem \ref{main}. We derive the following consequence from Lemma \ref{line-lemma}. Let $c:J\to L$ be a function satisfying $x\in c(x)$ for all $x\in J$, that is, for each $x$, $c$ selects a line incident at $x$. Note that for each $x\in J$ we have at least $n$ transverse lines intersecting at $x$. Thus at each $x$ we have at least $n$ different lines to choose from. We call such a function a coloring of $J$.

\begin{prop}
\label{coloring}
There exists a coloring $c$ of $J$ such that for every line $\ell\in L$, \break$\ab{\{x\in \ell\cap J: c(x)=\ell\}}=O(\ab{J}^{1/n})$.
\end{prop}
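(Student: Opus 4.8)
The plan is to build the coloring greedily, mimicking the iterative line-removal argument in the proof of Theorem~\ref{main}, and to assign a line as the color of exactly those joints that ``witness'' its removal. Set $m = K\ab{J}^{1/n}$ with $K$ chosen as in the proof of Theorem~\ref{main} (so that $K^n C_n > 1$, where $C_n$ is the constant from Lemma~\ref{line-lemma}). Run the same process: let $L^{(0)}=L$, $J^{(0)}=J$, and given $L^{(i)}\neq\emptyset$ with joint set $J^{(i)}$, Lemma~\ref{line-lemma} guarantees a line $\ell_i\in L^{(i)}$ with $\ab{\ell_i\cap J^{(i)}}\leq m$. Remove $\ell_i$ to form $L^{(i+1)}$. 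The key observation is that when $\ell_i$ is removed, every joint $x$ that lies on $\ell_i$ and is a joint of $J^{(i)}$ but \emph{not} of $J^{(i+1)}$ has lost transversality only because $\ell_i$ was one of its (at most, after removal) $n-1$ remaining transverse lines — in particular $x\in\ell_i$, so $\ell_i$ is a legitimate color for $x$. I would define $c(x) = \ell_i$ for every such $x$ (i.e. every $x\in (J^{(i)}\setminus J^{(i+1)})$ that lies on $\ell_i$; note $J^{(i)}\setminus J^{(i+1)}\subseteq \ell_i$ automatically since removing $\ell_i$ cannot destroy a joint unless that joint was on $\ell_i$). Since the $J^{(i)}$ are nested and decreasing and reach $\emptyset$, every $x\in J$ gets colored exactly once, at the unique step where it drops out; so $c$ is well-defined on all of $J$, and by construction $c(x)$ is always a line through $x$, hence $c$ is a coloring.

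It remains to bound, for a fixed line $\ell\in L$, the set $\{x\in\ell\cap J : c(x)=\ell\}$. If $\ell$ is never removed during the process (e.g. it is one of the final $n-1$ survivors), then no $x$ has $c(x)=\ell$, and we are done. Otherwise $\ell = \ell_i$ for a unique step $i$, and by the definition of the coloring, $c(x)=\ell$ implies $x\in J^{(i)}\cap\ell_i$. But the line $\ell_i$ was chosen precisely because $\ab{\ell_i\cap J^{(i)}}\leq m$. Hence $\ab{\{x\in\ell\cap J : c(x)=\ell\}}\leq m = K\ab{J}^{1/n} = O(\ab{J}^{1/n})$, which is exactly the claimed estimate.

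I expect the only subtle point — and the one to write out carefully — is the claim that a joint destroyed at step $i$ necessarily lies on the removed line $\ell_i$, i.e. that $J^{(i)}\setminus J^{(i+1)}\subseteq\ell_i$. This is where one uses the definition of a joint: a point $x$ is in $J^{(i)}$ because there exist $n$ lines of $L^{(i)}$ through $x$ with linearly independent directions; removing a line $\ell_i$ not passing through $x$ leaves all of these in place, so $x$ remains a joint of $L^{(i+1)}$. Conversely $x\in J^{(i)}\setminus J^{(i+1)}$ forces $\ell_i$ to have been among the transverse lines at $x$, so $x\in\ell_i$ and $c(x)=\ell_i$ is a valid color assignment. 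Everything else is just bookkeeping: the nestedness $J^{(0)}\supseteq J^{(1)}\supseteq\cdots$, the fact that each $x\in J$ is colored exactly once (at the step it exits), and the termination of the process once at most $n-1$ lines remain.
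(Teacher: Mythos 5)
Your proof is correct and follows essentially the same route as the paper's: run the iterative line-removal process from the proof of Theorem~\ref{main} and charge each joint to a removed line $\ell_i$ at a step where the joint still belongs to $J^{(i)}$, so that $\ab{\ell_i\cap J^{(i)}}\leq m$ gives the bound. The only (immaterial) difference is that the paper colors $x$ by the \emph{first} deleted line through $x$, while you color it by the line whose deletion destroys the joint at $x$; both choices land $x$ in $\ell_i\cap J^{(i)}$, which is all that is needed.
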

\begin{proof}[Short proof, sketch]
We will use the same method as in the proof of Theorem \ref{main}. With the notation as in the proof of Theorem \ref{main} we know that for $i\geq \ab{L}-(n-1)$ we have $J^{(i)}=\emptyset$. Let $i_0\leq \ab{L}-(n-1)$ be the first time $J^{(i)}$ is empty. We let $\ell_i\in L$ be the line deleted at the $i-$th step, that is $\ell_i\in L^{(i)}\backslash L^{(i+1)}$. Every point in $J$ is contained in some line $\ell_i$, $1\leq i\leq i_0$. For $x\in J$ let $i(x)$ be the first time a line containing $x$ is deleted, ie, $x\in \ell_{i(x)}$ and $x\neq \ell_{i}$ for $i< i(x)$. Define $c(x)=\ell_{i(x)}$. Since $\ell_i$ is such that the number of joints of $L^{(i)}$ contained in $\ell_i$ is less than or equal to $m$, it follows that $\ab{\{x\in J: c(x)=\ell_i\}}\leq m$, and the proposition is verified.

\end{proof}

\begin{proof}[The original proof]
Let $m=\ab{J}$ and note that for any coloring $c$ of $J$, $W(\ell):=\ab{\{x\in \ell\cap J: c(x)=\ell\}}$ satisfies $W(\ell)\leq\ab{\ell\cap J}$. We use an inductive method to define the coloring $c$. Choose an ordering $J=\{x_1,\dots,x_m\}$. By a provisional coloring $c_\nu$ on $J_\nu:=\{x_1,\dots,x_\nu\}$ we mean a function $c_\nu:J_\nu\to L$ with $x\in c_\nu(x)$ for all $x\in J_\nu$. Given a provisional coloring $c_\nu$ we define the provisional counting function, $W_\nu$, on $L$ by $W_\nu(\ell)=\ab{\{x\in \ell\cap J_\nu: c_\nu(x)=\ell\}}$. We will say that the provisional coloring $c_\nu$ is acceptable if $W_\nu(\ell)\leq Km^{1/n}$ for all $\ell\in L$, for a given big constant $K$ depending only on $n$ that we will choose later. The Proposition is proven if we can find an acceptable coloring $c_m$.

Define the provisional coloring $c_\nu$ on $\{x_1,\dots,x_\nu\}$ inductively by setting $c_1(x_1)=\ell_1$, for an arbitrarily selected line $\ell_1\in L$ intersecting $x_1$. It follows that $W_1(\ell_1)=1$, $W_1(\ell)=0$, for all $\ell\neq \ell_1$, which is acceptable if we choose $K>1$. 

We will show that if $c_\nu$ is an acceptable coloring on $\{x_1,\dots,x_\nu\}$ then, by possibly modifying $c_\nu$, we can obtain an acceptable coloring $c_{\nu+1}$ on $\{x_1,\dots,x_{\nu+1}\}$.

Suppose $c_\nu$ is an acceptable coloring on $\{x_1,\dots,x_\nu\}$. The good case is the following: there is a line $\ell_{\nu+1}$ intersecting $x_{\nu+1}$ such that $W_{\nu}(\ell_{\nu+1})+1\leq Km^{1/n}$. In this case we let $c_{\nu+1}$ on $\{x_1,\dots,x_\nu,x_{\nu+1}\}$ be defined by $c_{\nu+1}(x_i)=c_\nu(x_i)$ for all $1\leq i\leq \nu$, and $c_{\nu+1}(x_{\nu+ 1})=\ell_{\nu+1}$ . It follows that $W_{\nu+1}(\ell)=W_{\nu}(\ell)$ for all $\ell\neq \ell_{\nu+1}$, and $W_{\nu+1}(\ell_{\nu+1})=W_\nu(\ell_{\nu+1})+1$, so that $c_{\nu+1}$ is acceptable. 

We now turn to the complementary case, the bad one. Here we have $W_\nu(\ell)\geq\frac{1}{2} Km^{1/n}$ (the $\frac{1}{2}$ is just because $Km^{1/n}$ may not be integer),  for all $\ell\in L$ incident at $x_{\nu+1}$, and we note that there are at least $n$ such lines with linearly independent directions. Now look at each point  $x_i\in\ell\cap J_\nu$ with $c_\nu(x_i)=\ell$, where $\ell$ is a line incident at $x_{\nu+1}$. If we can change the value of $c_\nu(x_i)$ to say $c_\nu(x_i)=\ell'$, for $\ell'\neq \ell$, for some $i$, without violating the restriction on $W_\nu(\ell')$ (that is $W_\nu(\ell')+1\leq Km^{1/n}$), then we are done, as we define $c_{\nu+1}(x_j)=c_\nu(x_j)$ for $j\leq \nu$ and $x_j\neq x_i$, $c_{\nu+1}(x_i)=\ell'$, $c_{\nu+1}(x_{\nu+1})=\ell$. If we can not find such $x_i$ this means that for any $\ell$ incident at $x_{\nu+1}$, for any point $x_i\in\ell\cap J_\nu$ with $c_\nu(x_i)=\ell$, and any $\ell'$ incident at $x_i$ we have $W_\nu(\ell')\geq \frac{1}{2}Km^{1/n}$. 

We let $I^{(1)}=\{x\in J: x\in\ell\cap J_\nu,\text{ for some } \ell\text{ incident at }x_{\nu+1}\text{ and } c_\nu(x)=\ell\}$ and if $I^{(\sigma)}$ is defined we let $I^{(\sigma+1)}=\{x\in J_\nu:\text{ there exists }x'\in I^{(\sigma)}\text{ and }\ell\text{ incident at }\break x'\text{ such that } x\in \ell\text{ and }c_\nu(x)=\ell\}$. We note that, similarly as we did for points in $I^{(1)}$, if $x\in I^{(\sigma)}$ and $\ell$ is such that $c_\nu(x)=\ell$ and there exists $\ell'\neq \ell$ incident at $x$ such that $W_\nu(\ell')+1\leq Km^{1/n}$, then by modifying $c_\nu$ on the corresponding points on $I^{(1)}\cup\dots\cup I^{(\sigma)}$, we can obtain an acceptable coloring $c_{\nu+1}$ on $\{x_1,\dots,x_{\nu+1}\}$ as desired.

If this is not the case, that means that for any $x\in\bigcup\limits_{n=1}^{\infty}I^{(\sigma)}=:J'$ and any $\ell\in L$ of the at least $n$ transverse lines incident at $x$ we have $W_\nu(\ell)\geq\frac{1}{2}Km^{1/n}$. Note that $I^{(\sigma+1)}=I^{(\sigma)}$ for all sufficiently large $\sigma$, since these are nested subsets of the finite set $J$. We let $L'$ denote the set of lines of $L$ incident to some point of $J'$. Thus for all $\ell\in L'$ we have
\begin{equation}
 \label{counting-estimate}
\frac{1}{2}Km^{1/n}\leq W_\nu(\ell)\leq\ab{\ell\cap J'},
\end{equation}
where the second inequality comes from the inclusion $\{x\in \ell\cap J_\nu:c_\nu(x)=\ell\}\subseteq \ell\cap J'$, that we show now. We first note that if $x\in I^{(\sigma)}$ is such that $c_\nu(x)=\ell$, then any $x'\in \ell\cap J_\nu$ with $c_\nu(x')=\ell$ is in $I^{(\sigma)}$. Now for $\ell\in L'$ we have $\ell\cap J'\neq\emptyset$, so let $x_{i_0}\in \ell\cap J'$. For $x_{i_0}$ we have, $x_{i_0}\in J'$ hence $x_{i_0}\in I^{(\sigma)}$ for some $\sigma\geq 1$, then any $x\in\ell\cap J_\nu$ with $c_\nu(x)=\ell$ is in either in $I^{(\sigma)}$ or in $I^{(\sigma+1)}$ (depending whether $c_\nu(x_{i_0})=\ell$ or not), thus $x\in J'$ and the inclusion follows.

Now use Lemma \ref{line-lemma} together with (\ref{counting-estimate}) applied to $L'$ and $J'$, to obtain \break$\ab{J'}\geq C(n)(\frac{1}{2}Km^{1/n})^n=\frac{1}{2^n}C(n)K^{n}m$. We now choose $K$ big enough, depending on $n$ only so that $\frac{1}{2^n}C(n)K^{n}>1$. Hence we obtain $\ab{J}\geq \ab{J'}>m=\ab{J}$ which is a contradiction. This means that in the bad case we can always modify $c_\nu$ to obtain an acceptable coloring $c_{\nu+1}$. Therefore the Proposition is proved, by induction.

\end{proof}

For those familiar with \cite{Guth}, a coloring as in Proposition \ref{coloring} is the analog in ``warmup to multilinear Kakeya'' in \cite{Guth} to finding directions $v_{j(k),a(k)}$ such that for the $k$-th cube $Q_k$, the directed volume $V_{Z\cap Q_k}(v_{j(k),a(k)})$ is large ($V_{Z\cap Q_k}(v_{j(k),a(k)})\gtrsim 1$).
The next proposition follows exactly as in the last paragraphs in the mentioned section of \cite{Guth}.

\begin{proof}[Proof of Theorem \ref{main}]
By Proposition \ref{coloring} there exists a coloring $c$ satisfying $\ab{\{x\in \ell\cap J: c(x)=\ell\}}=O(\ab{J}^{1/n})$ for all $\ell\in L$. For each $x\in J$ we have a distinguished line, namely $c(x)$. We have just associated a line to any point $x\in J$. There are in total $\ab{L}$ lines and $\ab{J}$ points. By the pigeonhole principle, there is a line, $\ell^*$, associated to $\gtrsim \ab{J}/\ab{L}$ different points, therefore $\ab{\{x\in \ell^*\cap J: c(x)=\ell^*\}}\gtrsim \ab{J}/\ab{L}$.

On the other hand  $\ab{\{x\in \ell^*\cap J: c(x)=\ell^*\}}\lesssim \ab{J}^{1/n}$. From here it follows that $\ab{J} \lesssim \ab{L}^{n/(n-1)}$.
\end{proof}

\vspace{1cm}
{\bf Acknowledgements.} I am grateful to my dissertation advisor, Michael Christ, for many helpful comments. I thank Fedor Nazarov for pointing out the simplification of the proof of Theorem \ref{main}.


\end{document}